\def\car{{\mathbf 1}}
\def\/{\, | \,}
\def\e{\varepsilon}
\def\C{C}
\def\I{{\mathcal I}}
\newcommand{\Lip}{\operatorname{Lip}}
\newcommand{\Rem}{\operatorname{Rem}}
\newcommand{\Lin}{\operatorname{Lin}}
\def\D{{\mathbb D}}
\def\T{{\mathcal T}}
\def\R{{\mathbf R}}
\def\L{{\mathbf L}}
\def\Proba0{{\mathcal P}_0}
\def\1e{\frac{1}{\e}}
\def\L{{\mathcal L}}
\def\e{\varepsilon}
\def\/{\, | \,}
\def\d{\text{ d}}
\def\car{{\mathbf 1}}
\def\I{{\mathcal I}}
\def\car{{\mathbf 1}}
\def\P{{\text P}}
\def\D{{\mathbb D}}
\def\<{\langle}
\def\>{\rangle}
\def\({\Bigl(}
\def\){\Bigr)}
\def\D{{\mathbb D}}
\def\CC{{\mathcal C}}
\def\<<{\langle\!\langle\,}
\def\>>{\,\rangle\!\rangle}
\def\embed{{\mathfrak J}_\beta}
\newcommand{\Id}{\operatorname{Id}}
\newcommand{\Hol}{\operatorname{Hol}}
\newcommand{\Her}{{\mathcal H}}
\def\l2{l^2}
\def\hatH{{\mathcal K}}
\def\d{\, \text{d}}
\newcommand{\trace}{\operatorname{trace}}
\newcommand{\N}{{\mathbf N}}
\newcommand{\thmref}[1]{Theorem [\ref{#1}]}
\newtheorem{theorem}{Theorem}[section]
\newtheorem{lemma}{Lemma}[section]
\newtheorem{defn}{Definition}
\newtheorem*{example*}{Example}
\def\esp#1#2{\mathbb{E}_{#1}\left[ #2 \right]}
\begin{document}
\title{Higher order expansions via Stein method} \author{L. Coutin}
\address{Institute  of Mathematics\\
  Universit\'e Toulouse 3\\
  Toulouse, France} \email{laure.coutin@math.univ-toulouse.fr}
\author{L. Decreusefond}
\address{Institut Telecom, Telecom ParisTech, CNRS LTCI\\
  Paris, France} \email{Laurent.Decreusefond@telecom-paristech.fr}
\thanks{Both authors were partially supported by ANR-10-BLAN-0121.}
\begin{abstract}
  This paper is a sequel of \cite{CD:2012}. We show how to establish a
  functional Edgeworth expansion of any order thanks to the Stein
  method. We apply the procedure to the Brownian approximation of
  compensated Poisson process and to the linear interpolation of the
  Brownian motion. It is then apparent that these two expansions are
  of rather different form.
\end{abstract}
\keywords{Edgeworth expansion, Malliavin calculus, Rubinstein
  distance, Stein's method} \subjclass{60F15,60H07,60G15,60G55}
\maketitle{}

\section{Introduction}
\label{sec:introduction}
For $(\mu_n,\, n\ge 1)$, a sequence of probability measures which
satisfies a central limit theorem, i.e. $\mu_n$ converges weakly to a
Gaussian measure, it may be natural to ponder how this limit could be
refined. That means, can we find an alternative distribution $\mu$ so
that the speed of convergence of $\mu_n$ towards $\mu$ is faster than
the convergence of $\mu_n$ to the Gaussian measure of the CLT ? 
For instance, for a sequence of i.i.d. centered random variables
$(X_n,\, n\ge 1)$ with unit variance, if we consider
$S_n=n^{-1/2}\sum_{j=1}^n X_j$, a Taylor expansion of the
characteristic function of $S_n$ yields the expansion:
\begin{equation*}
  \esp{}{e^{itS_n}}=e^{-t^2/2}\left[ 1
    +\frac{(it)^3\gamma}{6\sqrt{n}}+\frac{(it)^4(\tau
      -3)}{24n}+\frac{(it)^6\gamma^2}{72n}\right] +o\left(\frac1n\right).
\end{equation*}
where $\gamma=\esp{}{X_1^3}$ and $\tau=\esp{}{X_1^4}$. This can be
interpreted as the distribution of $S_n$ to be close to the measure
with density $g_n$ given by
\begin{equation*}
  g_n(x)=\frac{e^{-x^2/2}}{\sqrt{2\pi}}\left(1+\frac{\gamma}{6\sqrt{n}}\,
    \Her_3(x)+\frac{(\tau-3)}{24
      n}\, \Her_4(x)+\frac{\gamma^2}{72n}\, \Her_6(x)\right), 
\end{equation*}
where $\Her_n$ is the $n$-th Hermite polynomial.  As the comparison of the
characteristic functions of two probability measures does not give
easily quantitative estimates regarding probability of events, moments
and so on; it is necessary to investigate alternative distances
between distribution of random variables.

One of the most natural distance is the so-called Kolmogorov distance
defined, for measures supported on $\R$, by
\begin{equation*}
  \d_{\text{Kol}}(\mu,\, \nu)=\sup_{x\in \R} \Bigl|\mu(-\infty,x]-\nu(-\infty,x]\Bigr|,
\end{equation*}
or more generally on $\R^k$,
\begin{equation*}
  \d_{\text{Kol}}(\mu,\, \nu)=\sup_{(x_1,\cdots,x_k)\in \R^k} \Bigl|\mu(\times_{j=1}^k(-\infty,x_j])-\nu(\times_{j=1}^k(-\infty,x_j])\Bigr|.
\end{equation*}
This definition is hardly usable for probability measures on more
abstract spaces like Hilbert spaces, space of continuous functions,
etc.  On the contrary, the Rubinstein distance can be defined in great
generality. Assume that $\mu$ and $\nu$ are two probability measures
on a metric space $(X,\, d)$. The space of $d$-Lipschitz functions is
the set of functions $f$ such that there exists $c>0$, depending only
on $f$, satisfying
\begin{equation*}
  |f(x)-f(y)|\le c\,  d(x,\,y), \text{ for all } x,\, y \in X.
\end{equation*}
We denote by $\Lip_1$ the set of $d$-Lipschitz functions for which $c$
can be taken equal to $1$. Then, the Rubinstein distance is defined by
\begin{equation*}
  d_R(\mu,\, \nu)=\sup_{F\in \Lip_1} \int_X F\d \mu-\int_X F\d\nu.
\end{equation*}
It is well known (see \cite{MR982264}) that if $(X,\,d)$ is separable,
$(\mu_n,\, n\ge 1)$ converges weakly to $\mu$ if and only if
$d_R(\mu_n,\, \mu)$ tends to $0$ as $n$ goes to infinity. Moreover,
according to \cite{MR2235448},
\begin{equation}\label{eq_edgeworth:1}
  d_{\text{Kol}}(\mu,\, \nu)\le 2\sqrt{d_R(\mu,\, \nu)}.
\end{equation}
The Stein method, which dates back to the seventies, is one approach
to evaluate such distances.  Since \cite{MR836279,MR714144}, it is
well known that Stein method can also lead to expansions of higher
order by pursuing the development. In a previous paper \cite{CD:2012},
we proved quantitative versions of some well known theorems: the
Donsker Theorem, convergence of Poisson processes of increasing
intensity towards a Brownian motion and approximation of a Brownian
motion by increasingly refined linear interpolations. We now want to
show that the same framework can be used to derive higher order
expansions, even in functional spaces.

Before going deeply into technicalities, let us just show how this
works on a simple $1$ dimensional example.  Imagine that we want to
precise the speed of convergence of the well-known limit in
distribution:
\begin{equation*}
  \frac{1}{\sqrt{\lambda}}(X_\lambda-\lambda) \xrightarrow{\lambda \to \infty} {\mathcal N}(0,\, 1),
\end{equation*}
where $X_\lambda$ is a Poisson random variable of parameter $\lambda$.
We consider the Rubinstein distance between the distribution of
$\tilde{X}_\lambda=\lambda^{-1/2}(X_\lambda-\lambda)$ and ${\mathcal
  N}(0,\, 1)$, which is defined as
\begin{equation}\label{eq_gaussapp20:1}
  \text{d}_(\tilde{X}_\lambda,\, {\mathcal
    N}(0,\, 1))=\sup_{F\in \Lip_1} \esp{}{F(\tilde{X}_\lambda)}-\esp{}{F({\mathcal N}(0,\, 1))}.
\end{equation}
The well known Stein Lemma stands that for any $F\in \Lip_1$, there
exists $\psi_F\in {\mathcal C}^2_b$ such that for all $x\in \R$,
\begin{equation*}
  F(x)-\esp{}{F({\mathcal N}(0,\, 1))}=x\,
  \psi_F(x)-\psi_F^\prime(x). 
\end{equation*}
\begin{equation*}
  \Vert \psi_F^\prime\|_\infty \le 1, \   \Vert \psi_F^{\prime\prime}\|_\infty \le 2.
\end{equation*}
Hence, instead of the right-hand-side of \eqref{eq_gaussapp20:1}, we
are lead to estimate
\begin{equation}\label{eq_gaussapp20:3}
  \sup_{\Vert \psi^\prime\|_\infty \le 1, \   \Vert
    \psi^{\prime\prime}\|_\infty \le 2}\esp{}{\tilde{X}_\lambda \psi(\tilde{X}_\lambda)-\psi^\prime(\tilde{X}_\lambda)}.
\end{equation}
This is where the Malliavin-Stein approach differs from the classical
line of thought. In order to transform the last expression, instead of
constructing a coupling, we resort to the integration by parts formula
for functionals of Poisson random variable. The next formula is well
known or can be viewed as a consequence of \eqref{eq_gaussapp6:4}:
\begin{equation*}
  \esp{}{\tilde{X}_\lambda G(\tilde{X}_\lambda)}=\sqrt{\lambda} \ \esp{}{G(\tilde{X}_\lambda+1/\sqrt{\lambda})-G(\tilde{X}_\lambda)}.
\end{equation*}
Hence, \eqref{eq_gaussapp20:3} is transformed into
\begin{equation}\label{eq_gaussapp20:2}
  \sup_{\Vert \psi^\prime\|_\infty \le 1, \   \Vert
    \psi^{\prime\prime}\|_\infty \le 2} \esp{}{\sqrt{\lambda}(\psi(\tilde{X}_\lambda+1/\sqrt{\lambda})-\psi(\tilde{X}_\lambda))-\psi^\prime(\tilde{X}_\lambda)}.
\end{equation}
According to the Taylor formula
\begin{equation*}
  \psi(\tilde{X}_\lambda+1/\sqrt{\lambda})-\psi(\tilde{X}_\lambda)=\frac{1}{\sqrt{\lambda}}\psi^\prime(\tilde{X}_\lambda)+\frac{1}{2\lambda}\psi^{\prime\prime}(\tilde{X}_\lambda+\theta/\sqrt{\lambda}),
\end{equation*}
where $\theta\in (0,1)$. If we plug this expansion into
\eqref{eq_gaussapp20:2}, the term containing $\psi^\prime$ is
miraculously vanishing and we are left with only the second order
term. This leads to the estimate
\begin{equation*}
  \text{d}_{R}\left( \tilde{X}_\lambda,\
    {\mathcal N}(0,\, 1)\right) \le \frac{1}{\sqrt{\lambda}}\cdotp
\end{equation*}
We now want to precise the expansion. For, we go one step further in
the Taylor formula (assuming $\psi$ has enough regularity)
\begin{equation*}
  \psi(\tilde{X}_\lambda+1/\sqrt{\lambda})-\psi(\tilde{X}_\lambda)=\frac{1}{\sqrt{\lambda}}\psi^\prime(\tilde{X}_\lambda)+\frac{1}{2\lambda}\psi^{\prime\prime}(\tilde{X}_\lambda)+\frac{1}{6\lambda^{3/2}}\psi^{(3)}(\tilde{X}+\theta/\sqrt{\lambda}).
\end{equation*}
Hence,
\begin{equation}\label{eq_gaussapp20:4}
  \esp{}{ \tilde{X}_\lambda \psi(\tilde{X}_\lambda)-\psi^\prime(\tilde{X}_\lambda)}= \frac{1}{2\sqrt{\lambda}}\esp{}{\psi^{\prime\prime}(\tilde{X}_\lambda)}+\frac{1}{6\lambda}\esp{}{\psi^{(3)}(\tilde{X}+\theta/\sqrt{\lambda})}.
\end{equation}
If $F$ is twice differentiable with bounded derivatives then $\psi_F$
is three time differentiable with bounded derivatives, hence the last
term of \eqref{eq_gaussapp20:4} is bounded by
$\lambda^{-1}\|\psi_F^{(3)}\|_\infty/6$. Moreover, the first part of
the reasoning shows that
\begin{equation*}
  \esp{}{\psi_F^{\prime\prime}(\tilde{X}_\lambda)}=
  \esp{}{\psi_F^{\prime\prime}(\mathcal N(0,\, 1))}+O(\lambda^{-1/2}).
\end{equation*}
Combining the last two results, we obtain that for $F$ twice
differentiable
\begin{multline*}
  \esp{}{F(\tilde{X}_\lambda)}-\esp{}{F({\mathcal N}(0,\, 1))}=\esp{}{
    \tilde{X}_\lambda
    \psi_F(\tilde{X}_\lambda)-\psi_F^\prime(\tilde{X}_\lambda)}\\
  =\frac{1}{2\sqrt{\lambda}}\esp{}{\psi_F^{\prime\prime}(\mathcal
    N(0,\, 1))} +O(\lambda^{-1}).
\end{multline*}
This line of thought can be pursued at any order provided that $F$ is
assumed to have sufficient regularity and we get an Edgeworth
expansion up to any power of $\lambda^{-1/2}$.  Using the properties
of Hermite polynomials, this leads to the expansion:
\begin{equation*}
  \esp{}{F(\tilde{X}_\lambda)}-\esp{}{F({\mathcal N}(0,\, 1))}
  =\frac{1}{6\sqrt{\lambda}}\esp{}{(F \Her_3)(\mathcal N(0,\, 1))}+O(\lambda^{-1}).
\end{equation*}
The paper is organized as follows. In Section
\ref{sec:gauss-appr-slob}, we recall the functional structure on which
the computations are made. In Section \ref{sec:norm-appr-poiss}, we
establish the Edgeworth expansion for the Poisson approximation of the
Brownian motion. In Section \ref{sec:linear-interpolation}, we apply
the same procedure to derive an Edgeworth expansion for the linear
approximation of the Brownian motion, which turns to be of a very
different flavor.  In \cite{CD:2012}, we computed the first order term
in the Donsker Theorem, we could as well pursue the expansion. It
would be a mixture of the two previous kinds of expansion.

\section{Gaussian structure on $\l2$}
\label{sec:gauss-appr-slob}

\subsection{Wiener measure}
\label{sec:wiener-measure}

For the three examples mentioned above, we seek to compare
quantitatively the distribution of a piecewise differentiable process
with that of a Brownian motion, hence we need to consider a functional
space to which the sample-paths of both processes belong to.  It has
been established in \cite{CD:2012} that a convenient space is the
space of $\beta$-differentiable functions for any $\beta<1/2$, which
we describe now. We refer to \cite{samko93} for details on fractional calculus. For $f\in \L^2([0,1];\ dt),$ (denoted by $\L^2$ for
short) the left and right fractional integrals of $f$ are defined by~:
\begin{align*}
  (I_{0^+}^{\alpha}f)(x) & =
  \frac{1}{\Gamma(\alpha)}\int_0^xf(t)(x-t)^{\alpha-1}\d t\ ,\ x\ge
  0,\\
  (I_{1^-}^{\alpha}f)(x) & =
  \frac{1}{\Gamma(\alpha)}\int_x^1f(t)(t-x)^{\alpha-1}\d t\ ,\ x\le 1,
\end{align*}
where $\alpha>0$ and $I^0_{0^+}=I^0_{1^-}=\Id.$ For any $\alpha\ge 0$,
any $f,\, g\in \L^2$ and $g\in \L^2$, we have~:
\begin{equation}
  \label{int_parties_frac}
  \int_0^1 f(s)(I_{0^+}^\alpha g)(s)\d s = \int_0^1 (I_{1^-}^\alpha 
  f)(s)g(s)\d s.
\end{equation}
The Besov-Liouville space $I^\alpha_{0^+}(\L^2):= \I_{\alpha,2}^+$ is
usually equipped with the norm~:
\begin{equation}
  \label{normedansIap}
  \|  I^{\alpha}_{0^+}f \| _{ \I_{\alpha,2}^+}=\| f\|_{\L^2}.
\end{equation}
Analogously, the Besov-Liouville space $I^\alpha_{1^-}(\L^2):=
\I_{\alpha,2}^-$ is usually equipped with the norm~:
\begin{equation*}
  \| I^{\alpha}_{1^-}f \| _{ \I_{\alpha,2}^-}=\|  f\|_{\L^2}.
\end{equation*}
Both spaces are Hilbert spaces included in $\L^2$ and if $(e_n,\, n\in
\N)$ denote a complete orthonormal basis of $\L^2$, then
$(k_n^\alpha:=I^\alpha_{1^-}e_n,\, n\in \N)$ is a complete orthonormal
basis of $\I_{\alpha,2}^-$. Moreover, we have the following Theorem,
proved in \cite{CD:2012}.
\begin{theorem}
  \label{thm:hilbertschmidt}
  The canonical embedding $\kappa_\alpha$ from $\I^-_{\alpha,2}$ into
  $\L^2$ is Hilbert-Schmidt if and only if $\alpha >1/2$.  Moreover,
  \begin{equation}\label{eq_gaussapp20:5}
    c_\alpha:=      \|\kappa_\alpha\|_{HS}=\| I^{\alpha}_{0^+}\|_{HS}=\|I^\alpha_{1^-}\|_{HS} = \frac{1}{2\Gamma(\alpha)}
    \left(\frac{1}{\alpha( \alpha-1/2)}\right)^{1/2}.
  \end{equation}
\end{theorem}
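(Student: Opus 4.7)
The plan is to reduce the Hilbert-Schmidt computation to a direct kernel integration on $\L^2$. Since $(e_n)$ is an orthonormal basis of $\L^2$ and, by the isometry \eqref{normedansIap}, $(k_n^\alpha=I^\alpha_{1^-}e_n)$ is an orthonormal basis of $\I_{\alpha,2}^-$, I would start from
\begin{equation*}
  \|\kappa_\alpha\|_{HS}^{2}=\sum_{n}\|\kappa_\alpha k_n^\alpha\|_{\L^2}^{2}=\sum_{n}\|I^\alpha_{1^-}e_n\|_{\L^2}^{2}=\|I^\alpha_{1^-}\|_{HS}^{2},
\end{equation*}
where on the right the operator is viewed as acting from $\L^2$ into $\L^2$. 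This identifies the embedding question with the Hilbert-Schmidt question for the fractional integration operator itself.

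Next I would use that $I^\alpha_{1^-}$ is an integral operator on $[0,1]$ with kernel
\begin{equation*}
  K_\alpha(x,t)=\frac{1}{\Gamma(\alpha)}(t-x)^{\alpha-1}\car_{\{t\ge x\}},
\end{equation*}
so that $\|I^\alpha_{1^-}\|_{HS}^{2}=\int_0^1\!\!\int_0^1 K_\alpha(x,t)^{2}\,\d x\,\d t$. For $2\alpha-1>0$ I can evaluate the inner integral as $\int_0^t(t-x)^{2\alpha-2}\d x=t^{2\alpha-1}/(2\alpha-1)$, and then the outer integral gives $1/(2\alpha(2\alpha-1))$. Putting things together yields
\begin{equation*}
  \|I^\alpha_{1^-}\|_{HS}^{2}=\frac{1}{\Gamma(\alpha)^{2}\cdot 2\alpha(2\alpha-1)}=\frac{1}{4\Gamma(\alpha)^{2}\,\alpha(\alpha-1/2)},
\end{equation*}
which, after taking square roots, is exactly $c_\alpha$ as claimed in \eqref{eq_gaussapp20:5}. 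For $\alpha\le 1/2$ the inner integral diverges, so the kernel is not square-integrable and hence $I^\alpha_{1^-}$ fails to be Hilbert-Schmidt; this gives the ``only if'' direction.

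Finally, the equality $\|I^\alpha_{0^+}\|_{HS}=\|I^\alpha_{1^-}\|_{HS}$ should be obtained either by the symmetry $x\mapsto 1-x$, under which the two kernels are exchanged, or directly from the integration by parts formula \eqref{int_parties_frac}, which shows that $I^\alpha_{0^+}$ and $I^\alpha_{1^-}$ are mutual adjoints on $\L^2$, and Hilbert-Schmidt norms are invariant under adjunction. I expect no serious obstacle: the only delicate point is the bookkeeping of the constant, making sure the factors $\Gamma(\alpha)^{2}$, $2\alpha$ and $2\alpha-1$ recombine into $1/[2\Gamma(\alpha)\sqrt{\alpha(\alpha-1/2)}]$ as stated.
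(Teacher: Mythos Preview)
Your argument is correct and is the standard computation. Note, however, that the present paper does not actually give a proof of this theorem: it is quoted as a result from the companion paper \cite{CD:2012}, so there is no in-paper proof to compare against. Your approach---identifying $\|\kappa_\alpha\|_{HS}$ with $\|I^\alpha_{1^-}\|_{HS}$ via the orthonormal basis $(k_n^\alpha)$, then computing the $\L^2\times\L^2$ norm of the explicit kernel---is exactly the natural route and almost certainly what is done in \cite{CD:2012} as well; the adjointness observation from \eqref{int_parties_frac} (or the change of variable $x\mapsto 1-x$) for the equality with $\|I^\alpha_{0^+}\|_{HS}$ is likewise standard.
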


To construct the Wiener measure on $\I_{\beta,\, 2}$, we start from
the It\^o-Nisio theorem. Let $(X_n,\, n\ge 1)$ be a sequence of
independent centered Gaussian random variables of unit variance
defined on a common probability space $(\Omega,\, {\mathcal A},\,
\P)$.  Then,
\begin{equation*}
  B(t):= \sum_{n\ge 1}X_nI_{0^+}^1(e_n)(t)
\end{equation*}
converges almost-surely for any $t\in [0,\, 1]$. Moreover, the
convergence holds in $L^2(\Omega;\, \I_{\beta,\, 2})$, so that,
$\mu_\beta$, the Wiener measure on $\I_{\beta,2}$ is the image measure
of $\P$ by the map $B$. Thus, $\mu_\beta$ is a Gaussian measure on
$\I_{\beta,\, 2}$ of covariance operator given by
\begin{equation*}
  V_\beta=I_{0^+}^\beta\circ I_{0^+}^{1-\beta}\circ
  I_{1^-}^{1-\beta}\circ I_{0^+}^{-\beta}.
\end{equation*}
This means that
\begin{equation*}
  \esp{\mu_\beta}{\exp(i\langle \eta,\, \omega\rangle_{\I_{\beta,\,
        2}})}=\exp(-\frac 12 \langle V_\beta\eta,\,
  \eta\rangle_{\I_{\beta,\, 2}}).
\end{equation*}
We could thus in principle make all the computations in $\I_{\beta,\,
  2}$. It turns out that we were not able to be explicit in the
computations of some traces of some involved operators, the
expressions of which turned to be rather straightforward in $l^2(\N)$
(where $\N$ is the set of positive integers). This is why we transfer
all the structure to $l^2(\N)$, denoted henceforth by $\l2$ for
short. This is done at no loss of generality nor precision since there
exists a bijective isometry between $\I_{\beta,\, 2}$ and $\l2$.

Actually, the canonical isometry is given by the Fourier expansion of
the $\beta$-th \textit{derivative} of an element of $\I_{\beta,\, 2}$:
for $f\in \I_{\beta,\, 2}$, we denote by $\partial_\beta f$ the unique
element of $\L^2$ such that $f=I^\beta_{0^+}\partial_\beta f$. We
denote by $(x_n,\, n\ge 1)$ a complete orthonormal basis of $\l2$. In
what follows, we adopt the usual notations regarding scalar product on
$\l2$:
\begin{equation*}
  \| x\|^2_{\l2}= \sum_{n=1}^\infty |x_n|^2 \text{ and } x.y=\sum_{n=1}^\infty x_ny_n, \text{ for all } x,\, y \in \l2.  
\end{equation*}
For the sake of simplicity, we also denote by a dot the scalar product
in $(\l2)^{\otimes k}$ for any integer $k$.

Consider the map $\embed$ defined by:
\begin{align*}
  \embed \, :\, \I_{\beta,2} & \longrightarrow \l2\\
  f & \longmapsto \sum_{n\ge 1}\left( \int_0^1 \partial_\beta
    f(s)e_n(s)\d s\right)\ x_n.
\end{align*}
According to the properties of Gaussian measure (see
\cite{MR0461643}), we have the following result.
\begin{theorem}
  \label{thm_gaussapp10:6} Let $\mu_\beta$ denote the Wiener measure
  on $\I_{\beta,\, 2}$.  Then $\embed^*\mu_\beta=m_{\beta}$, where
  $m_\beta$ is the Gaussian measure on $\l2$ of covariance operator
  given by
  \begin{equation*}
    S_\beta=\sum_{n,m\ge 1} \left(\int_0^1 k_n^{1-\beta}(s)k^{1-\beta}_m(s)\d
      s\right)\ x_n\otimes x_m.
  \end{equation*}
\end{theorem}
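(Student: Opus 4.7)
The plan is to exploit two facts: that $\embed$ is a continuous linear map (in fact an isometry by Parseval applied to $\partial_\beta f \in \L^2$ and the definition \eqref{normedansIap} of the norm on $\I_{\beta,2}$), and that $\mu_\beta$ is centered Gaussian. The pushforward of a centered Gaussian measure under a bounded linear map is again centered Gaussian, so $\embed^*\mu_\beta$ is a centered Gaussian on $\l2$. It therefore suffices to identify its covariance with $S_\beta$, and by polarization it suffices to check the equality $\esp{\embed^*\mu_\beta}{(\eta\cdot\omega)(\xi\cdot\omega)} = \<S_\beta\eta,\xi\>_{\l2}$ on the basis vectors $\eta=x_n$, $\xi=x_m$.

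To compute the covariance I would use the It\^o-Nisio representation $B(t)=\sum_{n\ge 1} X_n I^1_{0^+}(e_n)(t)$ available from Section \ref{sec:wiener-measure}. Differentiating termwise in the $\beta$-sense gives $\partial_\beta B = \sum_{m\ge 1} X_m I^{1-\beta}_{0^+}(e_m)$ in $L^2(\Omega;\L^2)$. The $n$-th coordinate of $\embed B$ is then
\begin{equation*}
  (\embed B)_n = \int_0^1 \partial_\beta B(s)\, e_n(s)\d s = \sum_{m\ge 1} X_m \int_0^1 I^{1-\beta}_{0^+}(e_m)(s)\, e_n(s)\d s.
\end{equation*}
Applying the fractional integration by parts identity \eqref{int_parties_frac} with $\alpha=1-\beta$, together with the definition $k_n^{1-\beta}=I^{1-\beta}_{1^-}e_n$, converts the integral into $\<e_m,k_n^{1-\beta}\>_{\L^2}$, so that $(\embed B)_n = \sum_m X_m \<e_m,k_n^{1-\beta}\>_{\L^2}$.

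Independence and unit variance of the $(X_m)$ then give
\begin{equation*}
  \esp{}{(\embed B)_n (\embed B)_m} = \sum_{j\ge 1} \<e_j,k_n^{1-\beta}\>_{\L^2}\<e_j,k_m^{1-\beta}\>_{\L^2} = \<k_n^{1-\beta},k_m^{1-\beta}\>_{\L^2},
\end{equation*}
which is precisely the $(n,m)$-entry of $S_\beta$, and this concludes the identification. The main obstacles I anticipate are purely technical rather than conceptual: one must justify the termwise application of $\partial_\beta$ to the It\^o-Nisio series (which is legitimate because $\partial_\beta\circ I^1_{0^+}=I^{1-\beta}_{0^+}$ is bounded from $\L^2$ into $\L^2$ and the series converges in $L^2(\Omega;\I_{\beta,2})$), and one must verify the interchange of the sum over $m$ with the $\L^2$ scalar product, which follows from dominated convergence since $(X_m\<e_m,k_n^{1-\beta}\>)_m$ is square-summable almost surely. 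Once these are in place, the covariance computation is immediate.
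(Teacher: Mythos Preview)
Your argument is correct. The paper itself does not supply a proof of this theorem: it merely records the statement as a consequence of the general theory of Gaussian measures in Banach spaces, with a reference to Kuo \cite{MR0461643}. What that reference gives is precisely the abstract fact you invoke, namely that the image of a centered Gaussian measure under a bounded linear map is again centered Gaussian, with covariance the conjugate of the original one. Your contribution is to make the resulting covariance explicit by computing $\esp{}{(\embed B)_n(\embed B)_m}$ directly from the It\^o--Nisio series, using the semigroup identity $\partial_\beta\circ I^1_{0^+}=I^{1-\beta}_{0^+}$ and the fractional integration by parts \eqref{int_parties_frac}; this yields exactly $\langle k_n^{1-\beta},k_m^{1-\beta}\rangle_{\L^2}$, which is the $(n,m)$-entry of $S_\beta$. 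The technical caveats you flag (termwise application of $\partial_\beta$, interchange of sum and inner product) are handled by the continuity of $\partial_\beta:\I_{\beta,2}\to\L^2$ and the $L^2(\Omega;\I_{\beta,2})$ convergence of the series, so nothing is missing.
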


\subsection{Dirichlet structure}
\label{sec:dirichlet-structure}

By $\CC^k_b(\l2;\, X)$, we denote the space of $k$-times Fréchet
differentiable functions from $\l2$ into an Hilbert space $X$ with
bounded derivatives: A function $F$ belongs to $\CC^k_b(\l2;\, X)$
whenever
\begin{equation*}
  \|F\|_{\CC^k_b(\l2;\, X)}:= \sup_{j=1,\, \cdots,\, k} \sup_{x\in \l2} \| \nabla^{(j)} F(x)\|_{X\otimes (\l2)^{\otimes j}}<\infty.
\end{equation*}
\begin{defn}
  The Ornstein-Uhlenbeck semi-group on $(\l2,\, m_\beta)$ is defined
  for any $F\in L^2(\l2, {\mathcal F},m_\beta; \, X)$ by
  \begin{align*}
    P_t^\beta F(u)&=\int_{\l2} F(e^{-t}u+\sqrt{1-e^{-2t}}\, v)\d
    m_\beta(v),
  \end{align*}
  where the integral is a Bochner integral.
\end{defn}
The following properties are well known.
\begin{lemma}
  \label{thm_gaussapp10:5}
  The semi-group $P^\beta$ is ergodic in the sense that for any $u\in
  \l2$,
  \begin{equation*}
    P_t^\beta F(u)\xrightarrow{t\to \infty} \int f\d m_\beta.
  \end{equation*}
  Moreover, if $F$ belongs to $\CC_b^k(\l2;\, X)$, then, $\nabla^{(k)}
  (P_t^\beta F)=\exp(-kt)P_t^\beta (\nabla^{(k)}F)$ so that we have
  \begin{equation*}
    \int_0^\infty  \sup_{u\in \l2} \|\nabla^{(k)}
    (P_t^\beta F)(u)\|_{(\l2)^{\otimes (k)}\otimes X} \d t
    \le \frac 1k \|F\|_{\CC^k_b(\l2;\, X)}.
  \end{equation*}
\end{lemma}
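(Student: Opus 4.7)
The proof falls naturally into three steps, one for each assertion, and all three rest on the explicit Mehler representation giving $P_t^\beta F(u)$ as a Bochner integral against $m_\beta$.

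For the ergodicity statement, I fix $u \in \l2$ and a bounded continuous $F$. As $t \to \infty$, the scaling factor $e^{-t}$ tends to $0$ while $\sqrt{1-e^{-2t}}$ tends to $1$, so the integrand $F(e^{-t}u + \sqrt{1-e^{-2t}}\, v)$ converges pointwise in $v$ to $F(v)$. Dominated convergence (against the constant majorant $\|F\|_\infty$) then yields $P_t^\beta F(u) \to \int F \, dm_\beta$. For unbounded $F \in L^2(m_\beta)$, one first approximates by bounded cylindrical functions and uses the fact that $P_t^\beta$ is a contraction on $L^2(m_\beta)$.

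For the commutation identity, I would differentiate under the Bochner integral. Since $F \in \CC^k_b(\l2;\, X)$, the integrand $v \mapsto F(e^{-t}u + \sqrt{1-e^{-2t}}\, v)$ is $k$ times Fréchet differentiable in $u$, with derivatives that are uniformly bounded in $v$. The chain rule applied to the affine map $u \mapsto e^{-t}u + \sqrt{1-e^{-2t}}\, v$ produces a factor $e^{-t}$ at each differentiation, so
$$\nabla^{(k)}_u \Bigl[ F\bigl(e^{-t}u + \sqrt{1-e^{-2t}}\, v\bigr) \Bigr] = e^{-kt}\, (\nabla^{(k)} F)\bigl(e^{-t}u + \sqrt{1-e^{-2t}}\, v\bigr).$$
The uniform bound on $\nabla^{(k)}F$ justifies exchanging $\nabla^{(k)}$ with the Bochner integral, giving $\nabla^{(k)}(P_t^\beta F) = e^{-kt} P_t^\beta (\nabla^{(k)} F)$.

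The last estimate then follows immediately: because $P_t^\beta$ is a Markov averaging operator, it is a contraction for the supremum norm, so
$$\sup_{u\in \l2} \| \nabla^{(k)}(P_t^\beta F)(u)\|_{(\l2)^{\otimes k}\otimes X} \le e^{-kt}\, \sup_{u\in \l2} \|\nabla^{(k)} F(u)\|_{(\l2)^{\otimes k}\otimes X} \le e^{-kt}\, \|F\|_{\CC^k_b(\l2;\, X)},$$
and integrating $e^{-kt}$ over $[0,\infty)$ gives $1/k$. The only genuinely technical point is the legitimacy of differentiating under the Bochner integral in infinite dimensions; this is standard once one controls the difference quotients uniformly in $v$, which is immediate from the boundedness of $\nabla^{(k)}F$.
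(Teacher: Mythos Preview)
Your argument is correct and is precisely the standard one. The paper itself does not prove this lemma: it is introduced with the phrase ``The following properties are well known'' and no proof is given. Your three steps --- dominated convergence in the Mehler formula for ergodicity, the chain rule under the Bochner integral for the commutation $\nabla^{(k)}P_t^\beta F = e^{-kt}P_t^\beta \nabla^{(k)}F$, and the Markov contraction bound combined with $\int_0^\infty e^{-kt}\d t = 1/k$ for the final estimate --- constitute exactly the routine justification one would supply for this ``well known'' fact, so there is nothing to compare.
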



We recall that for $X$ an Hilbert space and $A$ a linear continuous
map from $X$ into itself, $A$ is said to be trace-class whenever the
series $\sum_{n\ge 1} |(Af_n,\, f_n)_X|$ is convergent for one (hence
any) complete orthonormal basis $(f_n,\,n\ge 1)$ of $X$. When $A$ is
trace-class, its trace is defined as $\trace(A)=\sum_{n\ge 1} (Af_n,\,
f_n)_X$. For $x,\, y\in X$, the operator $x\otimes y$ can be seen
either as an element of $X\otimes X$ or as a continuous map from $X$
into itself via the identification : $x\otimes y(f)=(y,\, f)_X\,
x$. It is thus straightforward that such an operator is trace-class
and that $\trace(x\otimes y)=\sum_{n\ge 1}(y,\, f_n)_X(x,\,
f_n)_X=(x,\, y)_X$ according to the Parseval formula.  We also need to
introduce the notion of partial trace. For any vector space $X$,
$\Lin(X)$ is the set of linear operator from into itself. For $X$ and
$Y$ two Hilbert spaces, the partial trace operator along $X$ can be
defined as follows: it is the unique linear operator
\begin{equation*}
  \trace_X \, :\, \Lin(X\otimes Y) \longrightarrow \Lin(Y)
\end{equation*}
such that for any $R\in \Lin(Y)$, for any trace class operator $S$ on
$X$,
\begin{equation*}
  \trace_X(S\otimes R)=\trace_X(S)\, R.
\end{equation*}
For Hilbert valued functions, we define $A^\beta$ as follows.
\begin{defn}
  \label{lemma:generateur}
  Let $A^\beta$ denote the linear operator defined for $F\in
  \CC^2_b(\l2;\, X)$ by:
  \begin{equation*}
    (A^\beta F) (u)=u.(\nabla F)(u)-\trace_{\l2}(S_\beta \nabla^2 F (u)),\ \text{for all } u \in \l2.
  \end{equation*}
  We still denote by $A^\beta$ the unique extension of $A^\beta$ to
  its maximal domain.
\end{defn}
The map $A^\beta$ is the infinitesimal generator of $P^\beta$ in the
sense that for $F\in \CC^2_b(\l2;\, X)$: for any $u\in \l2$,
\begin{equation}\label{eq_gaussapp20:6}
  P^\beta_t F(u)=F(u)+\int_0^t A^\beta P^\beta_sF(u)\d s.
\end{equation}
As a consequence of the ergodicity of $P^\beta$ and of
\eqref{eq_gaussapp20:6}, we have the Stein representation formula: For
any sufficiently integrable function $F\, :\, \l2\to \R$,
\begin{equation}\label{eq_edgeworth:5}
  \int_{\l2} F\d m_\beta - \int_{\l2} F\d \nu =  \int_{\l2}\int_0^\infty A^\beta P_t^\beta F(x) \d t\d \nu(x).
\end{equation}

\section{Normal approximation of Poisson processes}
\label{sec:norm-appr-poiss}
Consider the process
\begin{equation*}
  N_\lambda(t)=\frac 1{\sqrt{\lambda}}\left( N(t) - \lambda t
  \right)=\frac 1{\sqrt{\lambda}}\left(  \sum_{n\ge 1} \car_{[T_n,\, 1]}(t)- \lambda t
  \right)
\end{equation*}
where $(T_n,\, n\ge 1)$ are the jump times of $N$, a Poisson process
of intensity $\lambda$. It is well known that $N_\lambda$ converges in
distribution on $\mathfrak D$ (the space of cadlag functions) to a
Brownian motion as $\lambda$ goes to infinity.  Since \begin{align*}
  I^\alpha_{0^+}\Bigl((.-\tau)_+^{-\beta}\Bigr)&=\Gamma(\alpha)^{-1}\int_\tau^t(s-\tau)^{-\beta}(t-s)^{\alpha-1}\d
  s\\
  &=\Gamma(1-\beta) \ (t-\tau)_+^{\alpha-\beta},
\end{align*}
we have $
I^\beta_{0^+}((.-\tau)_+^{-\beta})=\Gamma(1-\beta)\car_{[\tau,
  1]}(t)$. Hence, the sample-paths of $N_\lambda$ belong to
$\I_{\beta,2}$ for any $\beta<1/2$.  It also follows that
\begin{equation*}
  \embed N_\lambda=\sum_{n\ge 1} \frac{1}{\sqrt{\lambda}}\int_0^1
  k_n^{1-\beta}(s) (\d N(s)-\lambda \d s)\ x_n,
\end{equation*}
where, for any integer $n$,
\begin{equation*}
  k_n^{1-\beta}(t)=\frac{1}{\Gamma(1-\beta)}\int_t^1
  (s-t)^{-\beta}e_n(s)\d s.
\end{equation*}
For the sake of notations, we introduce
\begin{equation*}
  K_\lambda= \frac{1}{\sqrt{\lambda}}\sum_{n\ge 1}  \
  k_n^{1-\beta}\otimes x_n=\frac{1}{\sqrt{\lambda}}\, K_1.
\end{equation*}
The following theorem has been established in \cite{CD:2012}.
\begin{theorem}
  \label{thm_gaussapp10:2}
  For any $\lambda >0$, for any $F\in\CC^3_b(\l2;\, \R)$,
  \begin{equation}
    \label{eq_edgeworth:2}
    \left|    \esp{}{F(N_\lambda)}-\int_{\l2} F\d m_\beta\right| \le \frac{ 1}{6\sqrt{\lambda}}\ 
    c_{1-\beta}^3 \ \|F\|_{\CC^3_b(\l2;\, \R)}.
  \end{equation}
\end{theorem}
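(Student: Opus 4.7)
My plan is to combine the Stein representation formula \eqref{eq_edgeworth:5}, the Ornstein--Uhlenbeck smoothing of Lemma \ref{thm_gaussapp10:5}, and the Mecke integration-by-parts formula for the compensated Poisson process. Taking $\nu$ to be the law of $N_\lambda$ on $\l2$, formula \eqref{eq_edgeworth:5} reduces the task to controlling
\begin{equation*}
  \int_0^\infty \esp{}{A^\beta P^\beta_tF(N_\lambda)}\d t.
\end{equation*}
Writing $G_t := P^\beta_tF$, Lemma \ref{thm_gaussapp10:5} gives $\|\nabla^{(3)}G_t\|_\infty \le e^{-3t}\|F\|_{\CC^3_b(\l2;\,\R)}$, so the $t$-integration will produce the factor $\int_0^\infty e^{-3t}\d t=1/3$ present in the target bound. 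The whole problem thus boils down to showing an inequality of the form $|\esp{}{A^\beta G(N_\lambda)}|\le (2\sqrt{\lambda})^{-1}c_{1-\beta}^{\,3}\,\|\nabla^{(3)}G\|_\infty$ for a generic $G\in \CC^3_b(\l2;\,\R)$.

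For this I would start from the definition $A^\beta G(u)=u\cdot \nabla G(u)-\trace_{\l2}(S_\beta \nabla^2 G(u))$ and transform the first term by a coordinatewise Mecke identity for the compensated Poisson measure: for $f\in L^2([0,1])$ deterministic and $H$ smooth bounded,
\begin{equation*}
  \esp{}{\Bigl(\int_0^1 f(s)(\d N(s)-\lambda \d s)\Bigr) H(N)} = \esp{}{\int_0^1 \lambda f(s)\bigl(H(N+\delta_s)-H(N)\bigr)\d s}.
\end{equation*}
Because adding an atom $\delta_s$ shifts $\embed N_\lambda$ precisely by the vector $K_\lambda(s):=\lambda^{-1/2}\sum_n k_n^{1-\beta}(s)\,x_n=:K_1(s)/\sqrt{\lambda}$, summing the Mecke formula over coordinates yields
\begin{equation*}
  \esp{}{N_\lambda\cdot \nabla G(N_\lambda)} =\sqrt{\lambda}\,\esp{}{\int_0^1 K_1(s)\cdot \bigl(\nabla G(N_\lambda+K_\lambda(s))-\nabla G(N_\lambda)\bigr)\d s}.
\end{equation*}

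The decisive step is a second-order Taylor expansion of $\nabla G$ in the shift $K_\lambda(s)$, with integral remainder. Its linear-in-shift part equals
\begin{equation*}
  \int_0^1 K_1(s)\cdot \nabla^2 G(N_\lambda)\cdot K_1(s)\d s=\trace_{\l2}(S_\beta \nabla^2 G(N_\lambda)),
\end{equation*}
because the definition of $S_\beta$ in Theorem \ref{thm_gaussapp10:6} may be rewritten as $\int_0^1 K_1(s)\otimes K_1(s)\d s=S_\beta$. This is precisely the trace term that appears with a minus sign inside $A^\beta G$, so the two cancel and leave only the Taylor remainder. Bounding the latter pointwise by $\tfrac12\|\nabla^{(3)}G\|_\infty\|K_1(s)\|\,\|K_\lambda(s)\|^2$ and using $\|K_\lambda(s)\|^2=\|K_1(s)\|^2/\lambda$ gives
\begin{equation*}
  \bigl|\esp{}{A^\beta G(N_\lambda)}\bigr|\le \frac{\|\nabla^{(3)}G\|_\infty}{2\sqrt{\lambda}}\int_0^1 \|K_1(s)\|^3\d s.
\end{equation*}

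The final step is to identify the constant as $c_{1-\beta}^{\,3}$. For this I would use Parseval to rewrite $\|K_1(s)\|^2=\sum_n |k_n^{1-\beta}(s)|^2$ as the squared $L^2$-norm of $t\mapsto (t-s)_+^{-\beta}/\Gamma(1-\beta)$, obtain an explicit formula for $\|K_1(s)\|$, and compare the resulting integral with the closed form of $c_{1-\beta}$ from \eqref{eq_gaussapp20:5}; together with the $1/3$ factor from the $t$-integration this produces the announced $c_{1-\beta}^{\,3}/(6\sqrt{\lambda})$. In my view the main obstacle is not any single step but the clean tensorial bookkeeping surrounding the Taylor expansion, together with justifying the coordinatewise Mecke formula applied to an infinite series of components and exchanging the series with the integral and expectation; once the algebra of $K_1(\cdot)$, $S_\beta$ and $\nabla^2 G$ is written out, the cancellation with the trace term inside $A^\beta$ is essentially automatic.
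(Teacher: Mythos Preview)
Your approach is exactly the one the paper (and the underlying reference \cite{CD:2012}) uses: Stein representation \eqref{eq_edgeworth:5}, Mecke formula \eqref{eq_gaussapp6:4} applied to $u\cdot\nabla P_t^\beta F(u)$, Taylor expansion of the discrete increment $F(\embed N_\lambda+K_\lambda(\tau))-F(\embed N_\lambda)$, and the crucial cancellation coming from $\int_0^1 K_1(\tau)\otimes K_1(\tau)\d\tau=S_\beta$. This is precisely the $s=0$ instance of the proof of Theorem~\ref{thm_gaussapp10:7}.

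The only gap is the last step, where you ``compare the resulting integral with the closed form of $c_{1-\beta}$'': the pointwise bound you wrote leads to the constant
\[
\int_0^1 \|K_1(\tau)\|_{\l2}^{3}\d\tau
=\frac{2}{(5-6\beta)\,(1-2\beta)^{3/2}\,\Gamma(1-\beta)^{3}},
\]
which is \emph{not} equal to $c_{1-\beta}^{3}$; by Jensen (applied to $x\mapsto x^{3/2}$ and $f=\|K_1(\cdot)\|^2$) it is in fact strictly larger. So your argument yields the right $\lambda^{-1/2}$ rate with a finite explicit constant, but not the exact constant written in \eqref{eq_edgeworth:2}. This is the quantity the paper calls $\eta_3$ (up to a visible typo in the exponent of $\Gamma(1-\beta)$ in the definition of $\eta_s$), and the paper's own $s=0$ specialization gives $\xi_0=\eta_3/6$, so the discrepancy lies already in the paper's identification of $\xi_0$ with $c_{1-\beta}^3/6$; your argument is sound, only the claimed numerical identity at the very end is not.
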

The proofs uses a few basic notions of Malliavin calculus with respect
to the Poisson process $N$ which we recall rapidly now (for details,
see \cite{CD:2012,MR2531026}). It is customary to define the discrete
gradient as
\begin{equation*}
  D_\tau F(N)=F(N+\delta_\tau)- F(N), \text{ for any } \tau \in
  [0,\, 1],
\end{equation*}
where $N+\delta_\tau$ is the point process $N$ with an extra atom at
time $\tau$. We denote by $\D_{2,1}$ the set of square integrable
functionals $F$ such that $\esp{}{\int_0^1 |D_\tau F(N)|^2\d\tau}$ is
finite.  We then have the following relationship:
\begin{equation}
  \label{eq_gaussapp6:4} \esp{}{F(N)\int_0^1 g(\tau) (\d
    N(\tau)-\lambda \d \tau)}=\lambda\ \esp{}{\int_0^1
    D_\tau F(N)\  g(\tau)\d \tau},
\end{equation}
for any $g\in \L^2([0,1])$ and any $F\in \D_{2,1}$. Moreover,
\begin{equation*}
  D_\tau\left(\int_0^1 g(s)(\d N(s)-\lambda \d s)\right)=g(\tau).
\end{equation*}
so that
\begin{math}
  D_\tau \embed N_\lambda=K_\lambda(\tau).
\end{math}
In what follows, we make the convention that a sum like
$\sum_{r=1}^0\ldots$ is zero. For any integers $r\ge k\ge 1$, consider
$\T_r^k$ the set of all $k$-tuples (ordered lists of length $k$) of
integers $(a_1,\cdots,\, a_k)$ such that $\sum_{i=1}^k a_i=r$ and
$a_i\ge 1$ for any $i\in \{1,\, \cdots,\,
k\}$. 
We denote by $()$ the empty list and for any $r$, $\T_r^0=\{()\}$ and
$\T_r=\cup_{j=1}^r \T_r^j$. For $a\in \T_r$, we denote by $|a|$ its
length, i.e. the unique index $j$ (necessarily less than $r$) such
that $a\in \T_r^j$.  For two tuples $a=(a_1,\cdots,a_k)$ and
$b=(b_1,\cdots,\, b_n)$, their concatenation $a\oplus b$ is the
$(k+n)$-tuple $(a_1,\cdots,a_n,b_1,\cdots,b_n)$. We define by
induction the following constants :
\begin{equation*}
  \Xi_{()}=1,\   \Xi_{(j)}=\frac{1}{(j+2)!} \text{ and } \Xi_{a\oplus
    (j)}=\frac{\Xi_a}{(j+1)!(r+j+2k+2)} \text{ for any }j\in \N, a\in \T_r^k.
\end{equation*}
For instance, we have
\begin{equation*}
  \Xi_{(1)}=\frac{1}{6},\ \Xi_{(2)}=\frac{1}{24},\ \Xi_{(1,1)}=\frac{1}{72}\cdotp
\end{equation*}
For any tuple $a=(a_1,\cdots,a_k)\in \T_r$, we set
\begin{equation*}
  \hatH^a=\bigotimes_{l=1}^k \int_0^1 K_1^{\otimes (a_l+2)}(\tau)\d\tau\in (\l2)^{\otimes (r+2|a|)}.
\end{equation*}
Consider also the sequence $(\xi_s,\, s\ge 0)$ given by the recursion
formula:
\begin{equation}
  \label{eq_gaussapp10:5}
  \xi_s=  \sum_{j=2}^{s+1}\frac{1}{(j+1)!}\
  \frac{\xi_{s+1-j}}{3s+7-2j}\, c_{1-\beta}^{j+1}+ \frac{\eta_{s+3}}{(s+3)!},
\end{equation}
and 
\begin{equation*}
  \eta_s=\frac{2}{2+s(1-2\beta)}\, \frac{1}{(1-2\beta)^{s/2}\Gamma(1-\beta)^{s/2}}\cdotp
\end{equation*}
\begin{theorem}
  \label{thm_gaussapp10:7}
  Let $s$ be a non negative integer. We denote by $\nu_\lambda^*$ the
  distribution of $\embed N_\lambda$ on $\l2$. For $F\in
  \CC^{3s+3}_b(\l2;\, \R)$, we have
  \begin{multline}\label{eq:3}
    \int_{\l2}F(u)\d \nu^*_\lambda(u)=\int_{\l2} F(u)\d
    m_\beta(u)\\
    +\sum_{r=1}^s \lambda^{-r/2} \sum_{a\in \T_r}\Xi_a\int_{\l2}
    \nabla^{(r+2|a|)}F(u).\hatH^{a}\d m_\beta(u) +\Rem(s,\, F,\,
    \lambda)
  \end{multline}
  where the remainder term can be bounded as
  \begin{equation*}
    \left| \Rem(s,\, F,\, \lambda)\right|\le \xi_{s} \ \lambda^{-(s+1)/2}\|F\|_{\CC^{3s+3}_b(\l2;\, \R)}.
  \end{equation*}
\end{theorem}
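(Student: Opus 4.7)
The plan is to proceed by induction on $s$, the base case $s=0$ being essentially Theorem \ref{thm_gaussapp10:2} (with $\xi_0$ identified from that bound). The engine is a one-step expansion identity for the generator applied to a smooth scalar function $G$ on $\l2$:
\begin{equation*}
  \esp{}{A^\beta G(\embed N_\lambda)} = \sum_{j=1}^{N-1} \frac{\lambda^{-j/2}}{(j+1)!}\,\esp{}{\nabla^{(j+2)} G(\embed N_\lambda)\cdot \hatH^{(j)}} + R_N(G,\lambda),
\end{equation*}
where $\hatH^{(j)} = \int_0^1 K_1(\tau)^{\otimes(j+2)}\d\tau$. To derive this, I would rewrite $\embed N_\lambda \cdot \nabla G(\embed N_\lambda) = \int_0^1 (\nabla G(\embed N_\lambda)\cdot K_\lambda(\tau))(\d N(\tau) - \lambda\d\tau)$, apply the Mecke integration-by-parts \eqref{eq_gaussapp6:4} with a random integrand to obtain $\lambda\esp{}{\int_0^1 (D_\tau\nabla G \cdot K_\lambda(\tau))\d\tau}$ with $D_\tau\nabla G = \nabla G(\embed N_\lambda + K_\lambda(\tau)) - \nabla G(\embed N_\lambda)$, and Taylor-expand this difference in $K_\lambda(\tau)$ up to order $N$. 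The identity $\int_0^1 K_1(\tau)^{\otimes 2}\d\tau = S_\beta$ (Theorem \ref{thm_gaussapp10:6}) makes the $j=1$ term cancel the $\trace_{\l2}(S_\beta \nabla^2 G)$ part of $A^\beta G$, and the Taylor remainder $R_N$ is bounded by $\lambda^{-(N-1)/2}\|\nabla^{(N+2)}G\|_\infty\int_0^1\|K_1(\tau)\|^{N+2}\d\tau$, which accounts for the $\eta$-type constants.

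Combining this with the Stein representation \eqref{eq_edgeworth:5} for $G = P_t^\beta F$ and using $\nabla^{(k)}(P_t^\beta F) = e^{-kt} P_t^\beta \nabla^{(k)} F$, I obtain (modulo Taylor remainders)
\begin{equation*}
  \esp{}{F(\embed N_\lambda)} - \esp{m_\beta}{F} \sim \sum_{j=1}^{s+1} \frac{\lambda^{-j/2}}{(j+1)!}\int_0^\infty e^{-(j+2)t}\,\esp{}{P_t^\beta G^{(j)}(\embed N_\lambda)}\,\d t,
\end{equation*}
with $G^{(j)}(u) := \nabla^{(j+2)} F(u)\cdot \hatH^{(j)}$ scalar-valued. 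For each $j$ with $1 \leq j \leq s$, I invoke the inductive hypothesis at order $s-j$ applied to $P_t^\beta G^{(j)}$: by invariance of $m_\beta$ under $P_t^\beta$ the leading piece gives $\esp{m_\beta}{G^{(j)}}$, while the higher-order pieces are indexed by $a' \in \T_{r'}$, $r' \leq s-j$, and carry an additional factor $e^{-(r'+2|a'|)t}$. Integrating the combined exponential $e^{-(r'+j+2|a'|+2)t}$ against $\d t$ yields $1/(r'+j+2|a'|+2)$, precisely the denominator $(r+j+2k+2)$ appearing in the recursion $\Xi_{a\oplus(j)} = \Xi_a/((j+1)!(r+j+2k+2))$. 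A straightforward reindexing between my outside-in iteration order and the paper's tuple concatenation convention consolidates all coefficients into $\Xi_a$ as claimed.

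The remainder $\Rem(s,F,\lambda)$ aggregates two kinds of contribution. First, the outermost Taylor remainder at cutoff $N = s+3$ is bounded by $\lambda^{-(s+1)/2}\,\eta_{s+3}/(s+3)!\,\|F\|_{\CC^{3s+3}_b(\l2;\, \R)}$ using the explicit formula $\|K_1(\tau)\|_{\l2}^2 = (1-\tau)^{1-2\beta}/[\Gamma(1-\beta)^2(1-2\beta)]$, yielding the standalone $\eta_{s+3}/(s+3)!$ summand in \eqref{eq_gaussapp10:5}. Second, the inductive remainders $\Rem(s+1-j,P_t^\beta G^{(j-1)},\lambda)$ for $j=2,\ldots,s+1$, each weighted by a factor $1/((j+1)!(3s+7-2j))$ from the time-integrated exponential rate and scaled by $c_{1-\beta}^{j+1}$ via $\|\hatH^{(j-1)}\|_{(\l2)^{\otimes(j+1)}} \leq c_{1-\beta}^{j+1}$, reproduce exactly the summation defining $\xi_s$. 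The principal obstacle is the combinatorial bookkeeping linking the Taylor cutoff $s-r+2$ chosen at depth $k$ with cumulative order $r = \sum a_l$ to the sharp smoothness requirement $F\in\CC^{3s+3}_b$: with this cutoff the derivatives of $F$ needed at depth $k$ reach order $r+2k+(s-r+3) = 2k+s+3 \leq 3s+3$, matching the claimed regularity exactly.
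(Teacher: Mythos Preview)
Your proposal is correct and follows essentially the same route as the paper: induction on $s$, a Mecke/integration-by-parts step for $\embed N_\lambda\cdot\nabla G$, Taylor expansion of the resulting increment, the Stein representation \eqref{eq_edgeworth:5}, the inductive hypothesis applied termwise at lower ranks, and the concatenation bijection $\cup_{j}\T_{r-j}\to\T_r$ to reassemble the coefficients $\Xi_a$. The only cosmetic differences are that the paper obtains the generator identity via a deterministic test vector $x\in\l2$ and density (rather than a random-integrand Mecke formula) and applies the induction directly to the time-integrated functionals $F_j=\int_0^\infty \nabla^{(j+1)}P_t^\beta F\cdot\hatH^{(j-1)}\d t$ rather than to $P_t^\beta G^{(j)}$ pointwise in $t$, which slightly streamlines the remainder bookkeeping but does not alter the argument.
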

For $s=0$, this means that
\begin{equation*}
  \int_{\l2}F(u)\d \nu^*_\lambda(u)=\int_{\l2} F(u)\d
  m_\beta(u) + \Rem(0,\, F,\, \lambda)
\end{equation*}
where the remainder is bounded by
$\lambda^{-1/2}c_{1-\beta}^3\,\|F\|_{\CC^{3}_b(\l2;\, \R)}/6.$ That is
to say that it is the exact content of \thmref{thm_gaussapp10:2}. For
$s=1$, we obtain
\begin{multline*}
  \int_{\l2}F(u)\d \nu^*_\lambda(u)=\int_{\l2} F(u)\d m_\beta(u) \\ +
  \frac{\lambda^{-1/2}}{6} \int_{\l2} \nabla^{(3)}F(u).\hatH^{(1)}\d
  m_\beta(u)+\Rem(1,\, F,\, \lambda)
\end{multline*}
where $\Rem(1,\, F,\, \lambda)=O(\lambda^{-1})$ and for $s=2$, we get
\begin{multline*}
  \int_{\l2}F(u)\d \nu^*_\lambda(u)=\int_{\l2} F(u)\d m_\beta(u) +
  \frac{\lambda^{-1/2}}{6} \int_{\l2}
  \nabla^{(3)}F(u).\hatH^{(1)}\d m_\beta(u)\\
  +\lambda^{-1}\left[ \frac{1}{72}\int_{\l2}
    \nabla^{(6)}F(u).\hatH^{(1,1)}\d m_\beta(u)+\frac{1}{24}\int_{\l2}
    \nabla^{(4)}F(u).\hatH^{(2)}\d m_\beta(u)\right]\\
  +\Rem(2,\, F,\, \lambda),
\end{multline*}
with $\Rem(2,\, F,\, \lambda)=O(\lambda^{-3/2})$.
\begin{proof}
  As said before, for $s=0$, the proof reduces to that of
  \thmref{thm_gaussapp10:2}. Let $s\ge 1$ and assume that \eqref{eq:3}
  holds up to rank $s-1$.  Let $F\in \CC^{3s+2}_b(\l2;\R)$ and $x\in
  \l2$. Denoting by $G(y)=F(y)x$ for $y\in \l2$, we have
  \begin{align*}
    \esp{}{\embed N_\lambda. G(\embed
      N_\lambda)}&=\frac{1}{\sqrt{\lambda}}\sum_{n\ge 1}
    \esp{}{\int_0^1
      k_n^{1-\beta}(s) (\d N(s)-\lambda \d s)\ F(\embed N_\lambda)}\ x_n.x\\
    &=\frac{1}{\sqrt{\lambda}}\sum_{n\ge 1} \esp{}{\int_0^1
      k_n^{1-\beta}(\tau) D_\tau F(\embed N_\lambda)\lambda
      \d \tau}\ x_n.x\\
    &=\sqrt{\lambda}\ \esp{}{\int_0^1 D_\tau F(\embed
      N_\lambda). K_1(\tau)\d\tau}.
  \end{align*}
  According to the Taylor formula at order $s+1$,
  \begin{multline*}
    D_\tau F(\embed N_\lambda) = F(\embed N_\lambda+K_\lambda(\tau))-F(\embed N_\lambda)\\
    \shoveleft{ =\sum_{j=1}^{s+1}\frac{\lambda^{-j/2}}{j!}\nabla^{(j)}
      F(\embed
      N_\lambda).K_1(\tau)^{\otimes j}} \\
    +\frac{\lambda^{-(s+2)/2}}{(s+1)!}\int_0^1 (1-r)^{s+1}\
    \nabla^{(s+2)} F(\embed N_\lambda + r
    K_\lambda(\tau)). K_1(\tau)^{\otimes (s+2)} \d r.
  \end{multline*}
  Thus, we get
  \begin{multline}\label{eq_edgeworth:4}
    \esp{}{\embed N_\lambda.G(\embed N_\lambda)}
    =\sum_{j=1}^{s+1}\frac{\lambda^{-(j+1)/2}}{j!}\int_0^1
    \esp{}{\nabla^{(j)} F(\embed
      N_\lambda).K_1(\tau)^{\otimes (j+1)} }\d\tau\\
    +\frac{\lambda^{-(s+1)/2}}{(s+1)!}\int_0^1\int_0^1 (1-r)^{s+1}\
    \esp{}{ \nabla^{(s+2)} F(\embed N_\lambda + r
      K_\lambda(\tau)). K_1(\tau)^{\otimes (s+3)}} \d r\d\tau.
  \end{multline}
  By linearity and density, \eqref{eq_edgeworth:4} holds for any $G\in
  \C^{3s+2}_b(\l2;\l2).$ According to the Stein representation formula
  \eqref{eq_edgeworth:5}, we get:
  \begin{multline*}
    \esp{}{F(N_\lambda)}=\int_{\l2}F(u)\d m_\beta(u)\\
    + \sum_{j=2}^{s+1} \frac{\lambda^{-(j-1)/2}}{j!}  \int_0^\infty
    \int_{\l2}
    \nabla^{(j+1)}P_t^\beta F(u).(\int_0^1 K_1(\tau)^{\otimes  (j+1)}\d \tau) \d \nu^*_\lambda(u)\d t\\
    \shoveleft{ +\frac{\lambda^{-(s+1)/2}}{(s+1)!}\times }\\
    \int_{\l2}\int_0^\infty \int_0^1\int_0^1
    (1-\theta)^{s+1}\nabla^{(s+3)}P_tF(u+\theta
    K_1(\tau)).K_1(\tau)^{\otimes (s+3)}\d
    \theta \d \tau\d t \d \nu^*_\lambda(u)\\
    =\int_{\l2}F(u)\d m_\beta(u) + A_1+ A_2.
  \end{multline*}
  For any $j$, we apply the recursion hypothesis of rank $s+1-j$ to
  the functional
$$F_j: u\mapsto \int_0^\infty \nabla^{(j+1)}P_t^\beta F(u).\hatH^{(j-1)} \d t.$$ Thus, we have:
\begin{multline}\label{eq:4}
  A_1=\\
\shoveleft{ \sum_{j=2}^{s+1} \frac{\lambda^{-(j-1)/2}}{j!}
  \sum_{r=0}^{s-j+1}\lambda^{-r/2} \sum_{a\in \T_r}\Xi_a
  \int_0^\infty\int_{\l2} \nabla^{(2|a|+r+j+1)}P_t^\beta F(u).\hatH^{a\oplus (j-1)}\d m_\beta(u)}\d t\\
  + \sum_{j=2}^{s+1} \frac{\lambda^{-(j-1)/2}}{j!}  \Rem(s+1-j,\,
  F_j ,\, \lambda)\vphantom{\int_{\l2}\int_0^1}=B_1+B_2
\end{multline}
According to the commutation relationship between $\nabla$ and
$P^\beta$ and since $m_\beta$ is $P^\beta_t$-invariant, it follows
that
\begin{multline*}
  B_1= \sum_{j=2}^{s+1} \sum_{r=0}^{s-j+1}\frac{\lambda^{-(r+j-1)/2}}{j!}\\
  \times \sum_{a\in \T_r}\frac{\Xi_a}{2|a|+r+j+1} \int_{\l2}
  \nabla^{(2|a|+r+j+1)} F(u).\hatH^{a}\otimes \hatH^{(j-1)}\d
  m_\beta(u).
\end{multline*}
We now proceed to the change of variables $r\leftarrow r+j-1$,
$j\leftarrow j-1$ so that we have
\begin{multline*}
  B_1= \sum_{r=1}^{s}\lambda^{-r/2} \sum_{j=1}^{r}\\
  \times \sum_{a\in \T_{r-j}}\frac{\Xi_a}{(j+1)!(2|a|+r+2)} \int_{\l2}
  \nabla^{(2|a|+r+2)} F(u).\hatH^{a}\otimes \hatH^{(j)}\d m_\beta(u).
\end{multline*}
If $a$ belongs to $\T_{r-j}$ then $b=a\oplus(j)$ belongs to $\T_r$ and
$2|a|+r+2=2|b|+2$. In the reverse direction, for $b\in \T_r$, by
construction, the last component (say on the right) of $b$ belongs to
$\{1,\cdots,r\}$. Let $j$ denote the value of this component, it
uniquely determines $a$ such that $b=a\oplus (j)$ with $a\in \T_{r-j}$
thus $\cup_{j=1}^r \T_{r-j}=\T_r$ (where the union is a disjoint
union) and $B_1$ can be written as
\begin{equation*}
  B_1= \sum_{r=1}^{s}\lambda^{-r/2}  \sum_{a\in \T_{r}}\Xi_a \int_{\l2} \nabla^{(r+2|a|)} F(u).\hatH^{a}\d m_\beta(u).
\end{equation*}
Now, we estimate the remainder at rank $s$. According to the previous
expansions,
\begin{multline*}
  \Rem(s,\, F,\, \lambda)= A_2+B_2 =\sum_{j=2}^{s+1}
  \frac{\lambda^{-(j-1)/2}}{(j+1)!}  \Rem(s+1-j,\, F_j ,\,
  \lambda) \\
  +\frac{\lambda^{-(s+1)/2}}{(s+1)!}  \int_{\l2}\int_0^\infty \int_0^1
  (1-\theta)^{s+1}\nabla^{(s+3)}P_tF(u+\theta
  K_1(\tau)).K_1(\theta)^{(s+3)}\d  \theta \d t \d \nu^*_\lambda(u)\\
  \shoveleft{\le \sum_{j=2}^{s+1}
    \frac{\lambda^{-(j-1)/2}}{(j+1)!}\xi_{s+1-j} \lambda^{-(s+2-j)/2}
    \| F_j\|_{\CC^{3(s+1-j)+3}_b(\l2;\, \R)}
    +\frac{\lambda^{-(s+1)/2}}{(s+1)!}\times}\\
  \int_{\l2}\int_0^\infty e^{-(s+3)t}\int_0^1
  (1-\theta)^{s+1}P_t\nabla^{(s+3)}F(u+\theta
  K_1(\tau)).K_1(\theta)^{(s+3)}\d \theta \d t \d \nu^*_\lambda(u)
\end{multline*}
Since $\nabla^{(s+3)}F$ is bounded and $P^\beta$ is a Markovian
semi-group, for any $t\ge 0$, $P^\beta_t\nabla^{(s+3)}F$ is bounded by
$\|F\|_{\CC^{s+3}_b(\l2;\, \R)}$. Hence,
\begin{multline*}
  \Rem(s,\, F,\, \lambda)\le \lambda^{-(s+1)/2}\sum_{j=2}^{s+1}
  \frac{\xi_{s+1-j}}{(j+1)!} \| F\|_{\CC^{3s+6-3j}_b(\l2;\, \R)}
  \\ \shoveright{+ \frac{\lambda^{-(s+1)/2}}{(s+3)!}\ c_{1-\beta}^{s+1}\|F\|_{\CC^{s+3}_b(\l2;\, \R)}}\\
  \shoveleft{\le \lambda^{-(s+1)/2}\sum_{j=2}^{s+1}  \frac{\xi_{s+1-j}\, c_{1-\beta}^{j+1}}{(j+1)!\ (3s+7-2j)}\  \| F\|_{\CC^{3s+7-2j}_b(\l2;\, \R)}}\\
  +
  \frac{\lambda^{-(s+1)/2}}{(s+3)!}c_{1-\beta}^{s+1}\|F\|_{\CC^{s+3}_b(\l2;\,
    \R)}.
\end{multline*}
Since $\sup\limits_{j=2,\cdots,s+1} 3s+7-2j=3s+3$ and $s+3\le 3s+3$,
the result follows.
\end{proof}

\section{Linear interpolation of the Brownian motion}
\label{sec:linear-interpolation}

For $m\ge 1$, the linear interpolation $B_m^\dag$ of a Brownian motion
$B^\dag$ is defined by
\begin{equation*}
  B_m^\dag(0)=0 \text{ and }  \d B_m^\dag(t)=m\sum_{i=0}^{m-1} (B^\dag(i+1/m)-B^\dag(i/m)) \car_{[i/m,\, (i+1)/m)}(t)\d t.
\end{equation*}
Thus, $\embed B_m^\dag$ is given by
\begin{equation*}
  \embed B_m^\dag=\left( m\sum_{i=0}^{m-1} (B^\dag(i+1/m)-B^\dag(i/m)) \int_{i/m}^{(i+1)/m}k_n^{1-\beta}(t)\d t, \ n\ge 1\right).
\end{equation*}
Consider the $\L^2([0,\, 1])$-orthonormal functions
\begin{equation*}
  e^m_j(s)=\sqrt{m}\, \car_{[j/m,\, (j+1)/m)}(s),\, j=0,\, \cdots,\, m-1,\,  s\in [0,\, 1]
\end{equation*}
and $F_m^\dag=\text{span}(e^m_j,\, j=0,\, \cdots,\, m-1).$ We denote
by $p_{F_m^\dag}$ the orthogonal projection over $F_m^\dag$.  Since
$B_m^\dag$ is constructed as a function of a standard Brownian motion,
we work on the canonical Wiener space $({\mathcal C}^0([0,\, 1];\,
\R),\ \I_{1,\, 2},\, m^\dag)$. The gradient we consider, $D^\dag$, is
the derivative of the usual gradient on the Wiener space and the
integration by parts formula reads as:
\begin{equation}
  \label{eq_gaussapp10:10}
  \esp{m^\dag}{F \int_0^1 u(s)\d B^\dag(s)}=\esp{m^\dag}{\int_0^1
    D^\dag_s F\  u(s)\d s}
\end{equation}
for any $u\in \L^2([0,\, 1])$.
We need to introduce some constants which already appeared in \cite{CD:2012}.
For any $\alpha\in (0,1]$, let 
\begin{equation*}
  d_\alpha=\max\left(\sup_{z\ge 0}\int_0^{z}s^{\alpha-1}\cos(\pi s)\d s,\ \sup_{z\ge 0}\int_0^{z}s^{\alpha-1}\sin(\pi s)\d s\right).
\end{equation*}
Moreover, 
\begin{theorem}[cf. \cite{CD:2012}]
  \label{thm_gaussapp10:8} Let $\nu_m^\dag$ be the law of $\embed
  B_m^\dag$ on $\l2$ and let
  \begin{equation*}
    H^\dag_m=(p_{F_m^\dag}k_n^{1-\beta},\, n\ge 1).
  \end{equation*}
  For any $F\in \CC^2_b(\l2;\, \R)$,
  \begin{equation*}
    \left|    \int_{\l2} F\d\nu_m^\dag -\int_{\l2} F\d m_\beta\right|\le
    \frac{\gamma^\dag_{m,\,\beta}}{2}\, \|F\|_{\CC^2_b(\l2;\, \R)},
  \end{equation*}
  where, for any $0<\varepsilon <1/2-\beta$, for any $p$ such that
  $p(1/2-\beta-\varepsilon)>1$,
  \begin{equation*}
    \gamma^\dag_{m,\, \beta}\le \frac{d_{1/2+\varepsilon}c_{1-\beta} \zeta_{1/2+\varepsilon,\, p}}{\Gamma(1/2+\varepsilon)} \left(\sum_{n\ge 1} \frac{1}{n^{1+2\varepsilon}}\right)^{1/2} m^{-(1/2-\beta-\varepsilon)},  
  \end{equation*}
  with for any $p\ge 1$, $\alpha \in (1/p,\,1],$
  \begin{equation*}
    \zeta_{\alpha,\, p}=  \sup_{\|f\|_{\I_{\alpha,\, p}}=1}\|f\|_{\Hol_0(\alpha-1/p)}.
  \end{equation*}
\end{theorem}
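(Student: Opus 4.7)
The plan is to combine the Stein representation formula \eqref{eq_edgeworth:5} applied with $\nu = \nu_m^\dag$ together with the Malliavin integration by parts formula \eqref{eq_gaussapp10:10} on the underlying Wiener space. First, using the fractional integration by parts \eqref{int_parties_frac} and the fact that the orthogonal projection $p_{F_m^\dag}$ acts as conditional expectation on the mesh $\{j/m\}$, one verifies that the $n$-th coordinate of $\embed B_m^\dag$ equals $\int_0^1 (p_{F_m^\dag} k_n^{1-\beta})(s)\, dB^\dag(s)$, because the piecewise constant derivative of $B_m^\dag$ only feels the projection of $k_n^{1-\beta}$ onto $F_m^\dag$. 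Consequently $D^\dag_s \embed B_m^\dag = H_m^\dag(s)$ as an element of $\l2$.

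Set $G = P_t^\beta F$. Coordinate-wise application of \eqref{eq_gaussapp10:10} yields
\begin{equation*}
\esp{}{\embed B_m^\dag \cdot \nabla G(\embed B_m^\dag)} = \esp{}{\nabla^2 G(\embed B_m^\dag) \cdot \int_0^1 H_m^\dag(s)\otimes H_m^\dag(s)\, ds}.
\end{equation*}
A direct rewriting of \thmref{thm_gaussapp10:6} gives $S_\beta = \int_0^1 K_1(s)\otimes K_1(s)\, ds$, and hence
\begin{equation*}
\esp{}{A^\beta G(\embed B_m^\dag)} = \esp{}{\nabla^2 G(\embed B_m^\dag) \cdot \int_0^1 \bigl[H_m^\dag(s)^{\otimes 2} - K_1(s)^{\otimes 2}\bigr]\, ds}.
\end{equation*}
Factoring the tensor difference as $(H_m^\dag - K_1)\otimes H_m^\dag + K_1 \otimes (H_m^\dag - K_1)$ and applying Cauchy--Schwarz in $\l2\otimes \l2$ bounds this expression by $\|\nabla^2 G\|_\infty$ times $\bigl(\sum_n \|(I-p_{F_m^\dag}) k_n^{1-\beta}\|_{\L^2}^2\bigr)^{1/2}$ up to an absolute constant. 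The prefactor $1/2$ then emerges from the $t$-integration $\int_0^\infty e^{-2t}\, dt$, using $\|\nabla^2 P_t^\beta F\|_\infty \le e^{-2t}\|F\|_{\CC^2_b(\l2;\,\R)}$ from \lemref{thm_gaussapp10:5}.

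The main obstacle is the $n$-summable quantitative control of the projection error $\|(I-p_{F_m^\dag}) k_n^{1-\beta}\|_{\L^2}$: one must simultaneously extract the speed $m^{-(1/2-\beta-\varepsilon)}$ from the regularity of each $k_n^{1-\beta}$ and ensure convergence in $n$ coming from the oscillation of the basis $e_n$ against the fractional kernel. This is accomplished by bounding the $\I^-_{1/2+\varepsilon,p}$-norm of $k_n^{1-\beta}$ through $d_{1/2+\varepsilon}/\Gamma(1/2+\varepsilon)$, which captures the oscillation of $e_n$ integrated against $(\cdot -t)^{-(1/2-\varepsilon)}$, and then invoking the continuous embedding $\I^-_{1/2+\varepsilon,p}\hookrightarrow \Hol_0(1/2+\varepsilon-1/p)$ with norm $\zeta_{1/2+\varepsilon,p}$. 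Since a $\gamma$-Hölder function is approximated in $\L^2$ by its projection on $F_m^\dag$ at rate $m^{-\gamma}$, with $\gamma=1/2-\beta-\varepsilon$, this produces the stated speed, while summation in $n$ yields the factor $\bigl(\sum_n n^{-(1+2\varepsilon)}\bigr)^{1/2}$. This delicate balance between regularity and summability is the heart of the estimate carried out in \cite{CD:2012}.
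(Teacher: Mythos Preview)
The paper does not actually prove this statement: Theorem~\ref{thm_gaussapp10:8} is stated with the attribution ``cf.~\cite{CD:2012}'' and no proof is given here; it is only recalled as input for the induction in Theorem~\ref{thm_gaussapp10:9}. Your sketch is therefore not being compared against anything in the present text, but it is a faithful outline of the argument in \cite{CD:2012}: Stein representation~\eqref{eq_edgeworth:5} reduces to estimating $\esp{}{A^\beta P_t^\beta F(\embed B_m^\dag)}$, the Gaussian integration by parts~\eqref{eq_gaussapp10:10} converts the drift term into a second-order term with covariance $S_m^\dag=\int_0^1 H_m^\dag(s)^{\otimes 2}\d s$, and after the $t$-integration one is left with $\tfrac12\,\nabla^{(2)}F\cdot(S_m^\dag-S_\beta)$, whose size is controlled by the projection error $\|(I-p_{F_m^\dag})k_n^{1-\beta}\|_{\L^2}$ via the fractional/H\"older embeddings you describe.

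One small imprecision: the factor $c_{1-\beta}$ in the bound for $\gamma_{m,\beta}^\dag$ does not come from the H\"older estimate of the individual $k_n^{1-\beta}$, but from the Cauchy--Schwarz step applied to the factorization $(H_m^\dag-K_1)\otimes H_m^\dag+K_1\otimes(H_m^\dag-K_1)$, where one needs $\|K_1\|_{\L^2([0,1];\l2)}=c_{1-\beta}$ (this is exactly~\eqref{eq_gaussapp20:5}). You fold this into ``an absolute constant,'' which obscures where the explicit constant lives but does not affect the argument.
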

\begin{theorem}
  \label{thm_gaussapp10:9}
  For any integer $s$, for any $F\in {\mathcal C}^{2s+2}_b(\l2;\,
  \R)$, we have the following expansion:
  \begin{multline*}
    \esp{\nu^\dag_m}{F}= \sum_{j=0}^s \frac{1}{2^j \, j!}
    \int_{\l2}\langle \nabla^{(2j)}F(u),\ (S_m^\dag-S_\beta)^{\otimes
      j}\rangle_{(\l2)^{\otimes 2j}}\d
    m_\beta(u)\\
    + \Rem^\dag(s,\, F,\, m),
  \end{multline*}
  where $S_m^\dag=\trace_{\L^2([0,\, 1])}(K_m^\dag\otimes K_m^\dag)$
  and $\Rem ^\dag(s,\, F,\,m)$ can be bounded by
  \begin{equation*}
    \left|\Rem^\dag(s,\, F,\, m)\right| \le \frac{(\gamma^\dag_{m,\beta})^{s+1}}{2^{s+1}}\ \|F\|_{{\mathcal C}^{2s+2}_b(\l2;\,
      \R)}\cdotp
  \end{equation*}
\end{theorem}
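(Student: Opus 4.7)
The plan is to argue by induction on $s$, the base case $s=0$ being exactly \thmref{thm_gaussapp10:8}. The engine of the induction is the combination of the Stein representation formula \eqref{eq_edgeworth:5} with the Gaussian integration-by-parts formula available at $\nu_m^\dag$: since this measure is centered Gaussian on $\l2$ with covariance operator $S_m^\dag$, for any $G\in\CC^2_b(\l2;\,\R)$ one has $\esp{\nu_m^\dag}{u.\nabla G(u)}=\esp{\nu_m^\dag}{\trace_{\l2}(S_m^\dag\nabla^{(2)}G(u))}$. Combined with Definition \ref{lemma:generateur} of $A^\beta$, this rewrites the Stein operator as a ``covariance-gap'' operator:
\begin{equation*}
  \esp{\nu_m^\dag}{A^\beta G}=\esp{\nu_m^\dag}{\langle \nabla^{(2)}G(\cdot),\, S_m^\dag-S_\beta\rangle_{(\l2)^{\otimes 2}}}.
\end{equation*}

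Applying this identity with $G=P_t^\beta F$, and using the commutation $\nabla^{(2)}P_t^\beta F=e^{-2t}P_t^\beta \nabla^{(2)}F$ from \lemref{thm_gaussapp10:5} together with \eqref{eq_edgeworth:5}, one obtains the basic first-order decomposition
\begin{equation*}
  \esp{\nu_m^\dag}{F}-\esp{m_\beta}{F}=\int_0^\infty e^{-2t}\,\esp{\nu_m^\dag}{P_t^\beta F_1}\,\d t,
\end{equation*}
where $F_1(u):=\langle \nabla^{(2)}F(u),\, S_m^\dag-S_\beta\rangle$ is a scalar functional belonging to $\CC^{2s}_b(\l2;\,\R)$ as soon as $F\in\CC^{2s+2}_b(\l2;\,\R)$. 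I would then apply the induction hypothesis at rank $s-1$ to $P_t^\beta F_1$ inside the $t$-integral. By the $P_t^\beta$-invariance of $m_\beta$ and the commutation $\nabla^{(2j)}P_t^\beta=e^{-2jt}P_t^\beta\nabla^{(2j)}$, each term of the resulting expansion collapses: setting $F_j(u):=\langle\nabla^{(2j)}F(u),\,(S_m^\dag-S_\beta)^{\otimes j}\rangle$, one has $\esp{m_\beta}{\langle\nabla^{(2j)}P_t^\beta F_1,(S_m^\dag-S_\beta)^{\otimes j}\rangle}=e^{-2jt}\esp{m_\beta}{F_{j+1}}$. The Laplace computation
\begin{equation*}
  \frac{1}{2^j\, j!}\int_0^\infty e^{-2(j+1)t}\,\d t=\frac{1}{2^{j+1}(j+1)!}
\end{equation*}
produces the correct combinatorial coefficient, and after the reindexing $k=j+1$ the sum over $j=0,\ldots,s-1$ becomes exactly the sum over $k=1,\ldots,s$ announced in the theorem.

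The remainder is then $\Rem^\dag(s,F,m)=\int_0^\infty e^{-2t}\Rem^\dag(s-1,P_t^\beta F_1,m)\,\d t$. By induction $|\Rem^\dag(s-1,P_t^\beta F_1,m)|\le(\gamma^\dag_{m,\beta})^{s}/2^{s}\cdot\|P_t^\beta F_1\|_{\CC^{2s}_b}$, and the pointwise bound $\|\nabla^{(k)}P_t^\beta G\|_\infty\le\|\nabla^{(k)}G\|_\infty$ coming from \lemref{thm_gaussapp10:5} reduces this further to $\|F_1\|_{\CC^{2s}_b}$. The main obstacle I anticipate is the ``gain of one $\gamma^\dag_{m,\beta}$'' estimate
\begin{equation*}
  \|F_1\|_{\CC^{2s}_b(\l2;\,\R)}\le \gamma^\dag_{m,\beta}\,\|F\|_{\CC^{2s+2}_b(\l2;\,\R)},
\end{equation*}
which, via $\nabla^{(k)}F_1=\langle\nabla^{(k+2)}F,\,S_m^\dag-S_\beta\rangle$, reduces to an operator-norm control of $S_m^\dag-S_\beta$ in $(\l2)^{\otimes 2}$ by $\gamma^\dag_{m,\beta}$. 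This control is precisely what the quantitative bound on $\gamma^\dag_{m,\beta}$ in \thmref{thm_gaussapp10:8} (and its proof in \cite{CD:2012}) delivers. Granted this comparison, the single integration $\int_0^\infty e^{-2t}\,\d t=1/2$ supplies the missing factor $1/2^{s+1}$ and yields the announced remainder estimate $|\Rem^\dag(s,F,m)|\le(\gamma^\dag_{m,\beta})^{s+1}/2^{s+1}\cdot\|F\|_{\CC^{2s+2}_b(\l2;\,\R)}$.
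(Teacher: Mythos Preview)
Your proposal is correct and follows essentially the same route as the paper: induction on $s$ with base case \thmref{thm_gaussapp10:8}, the Stein representation combined with the Gaussian integration-by-parts at $\nu_m^\dag$ to obtain the covariance-gap identity, then the commutation $\nabla^{(k)}P_t^\beta=e^{-kt}P_t^\beta\nabla^{(k)}$, the $P_t^\beta$-invariance of $m_\beta$, and the control $\|S_m^\dag-S_\beta\|\le\gamma_{m,\beta}^\dag$ for the remainder. The only cosmetic difference is that the paper applies the induction hypothesis directly to the single function $\tilde F:=\int_0^\infty \nabla^{(2)}P_t^\beta F\cdot(S_m^\dag-S_\beta)\,\d t$, whereas you apply it to $P_t^\beta F_1$ for each $t$ and then integrate; these are related by Fubini and yield the same coefficients and the same remainder bound.
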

\begin{proof}
  For $s=0$, the result boils down to \thmref{thm_gaussapp10:8}. We
  proceed by induction on $s$. According to the induction hypothesis and to the Stein representation formula \cite{CD:2012},
  for $F$ sufficiently regular,
  \begin{multline*}
    \int_{\l2} F(u)\d \nu_m^\dag(u) - \int_{\l2} F(u) \d m_\beta(u) \\
    \shoveleft{= \int_{\l2} \int_0^\infty \nabla^{(2)}P_t^\beta
      F(u).\, (S_m^\dag-S_\beta)\d t\d \nu_m^\dag
      (u)}\\
    \shoveleft{=\sum_{j=0}^s \frac{1}{2^j \, j!}  \int_{\l2}
      \nabla^{(2)}(\int_0^\infty \nabla^{(2j)}P_t^\beta F(u).\
      (S_m^\dag-S_\beta)^{\otimes ( j)}\d t).(S_m^\dag-S_\beta)\d
      m_\beta(u)}\\
    \shoveright{+ \Rem^\dag(s,\, \int_0^\infty
      \nabla^{(2)}P_t^\beta F.(S_m^\dag-S_\beta) \d t,\, m)}\\
    \shoveleft{=\sum_{j=0}^s \frac{1}{2^j \, j!}  \int_{\l2}
      \int_0^\infty \nabla^{(2j+2)}P_t^\beta F(u).\
      (S_m^\dag-S_\beta)^{\otimes ( j+1)} \d t \d
      m_\beta(u)}\\
    + \Rem^\dag(s,\, \int_0^\infty \nabla^{(2)}P_t^\beta
    F.(S_m^\dag-S_\beta) \d t,\, m).
  \end{multline*}
  Since $\nabla^{(j)}P_t^\beta F(u)=e^{-jt}P_t^\beta \nabla^{(j)}F(u)$
  and since $m_\beta$ is invariant under the action of $P^\beta $, we
  obtain
  \begin{multline*}
    \int_{\l2} F(u)\d \nu_m^\dag(u) = \int_{\l2} F(u) \d m_\beta(u) \\
    + \sum_{j=0}^{s} \frac{1}{2^{j+1}}  \int_{\l2}
    \nabla^{(2j+2)}F(u).\ (S_m^\dag-S_\beta)^{\otimes (j+1)}\d
    m_\beta(u)\\
    + \Rem^\dag(s,\, \int_0^\infty \nabla^{(2)}P_t^\beta F.\
    (S_m^\dag-S_\beta)\d t,\, m).
  \end{multline*}
  By a change of index in the sum, we obtain the main part of the
  expansion for the rank $s+1$. Moreover,
  \begin{multline*}
    |  \Rem^\dag(s+1, \, F,\, m)|\le \frac{(\gamma_{m,\, \beta}^\dag)^s}{2^{s+1} (s+1)!}\ \|\int_0^\infty \nabla^{(2)}P_t^\beta  F.(S_m^\dag-S_\beta)\d t \|_{{\mathcal C}^{2(s+1)}_b(\l2)}\\
    \le \frac{(\gamma_{m,\, \beta}^\dag)^s}{2^{s+1}}
    \frac{\|S_m^\dag-S_\beta\|_{\l2}}{2}\ \|F\|_{{\mathcal
        C}^{2(s+2)}_b(\l2)}.\end{multline*} Since the norm of a
  bounded operator is bounded by its trace provided that the latter
  exists, we have $\|S_m^\dag-S_\beta\|_{\l2}\le \gamma_{m,\,
    \beta}^\dag$, hence the result.
\end{proof}


\end{document}